\documentclass[11pt]{article}
\usepackage[mathvars]{brsheader} 
\abbreviateBBMathFont

\newcommand{\xto}[1]{\xrightarrow{#1}}
\newcommand{\MRto}{\xrightarrow[\text{MR}]{}}

\title{Cuboids are canonically Ramsey}
\author{Benedict \textsc{Randall Shaw}\footnotemark[1]}
\date{February 2026}

\begin{document}

\maketitle

\renewcommand{\thefootnote}{\fnsymbol{footnote}}
\footnotetext[1]{\href{mailto:bwr26@cam.ac.uk}{bwr26@cam.ac.uk}, Department of Pure Mathematics and Mathematical Statistics (DPMMS), University of Cambridge, Wilberforce Road, Cambridge, CB3 0WA, United Kingdom}

\begin{abstract}
    We say a set of points \(C\subset \BBR^n\) is \emph{canonically Ramsey} if there is some set of points \(S\subset \BBR^{n'}\) such that any colouring of \(S\), with any number of colours, admits either a monochromatic or rainbow copy of \(C\)---that is to say, some set of points congruent to \(C\) either all receive the same colour, or all receive different colours. Mao, Ozeki, and Wang \cite{mao2022euclideangallairamseytheory} introduced this notion, proving that 30-60-90 triangles are canonically Ramsey, since when various other canonically Ramsey configurations have been identified (by Gehér, Sagdeev, and Tóth \cite{geher2026}, and others). Fang, Ge, Shu, Xu, Xu, and Yang \cite{fang2025canonicalramseytrianglesrectangles} showed that all triangles and rectangles are canonically Ramsey, and asked whether all cuboids are canonically Ramsey. Here cuboids are sets of the form \(\{0,b_1\}\times\dots\times\{0,b_s\}\), and in particular may have dimension greater than three. We resolve this question, proving that all cuboids are canonically Ramsey.
\end{abstract}

\section{Introduction}
For positive integers \(n,r\), and some configuration (finite set) \(C\subset \BBR^n\), we write
\[\BBR^n \xto{r} C\]
if any \(r\)-colouring of the points of \(\BBR^n\) admits a monochromatic copy of \(C\)---that is to say, given any \(r\)-colouring of the points of \(\BBR^n\), there is some set of points congruent to \(C\) all of which receive the same colour.
More generally, for any \(S\subset\BBR^n\), we write
\[S \xto{r} C\]
if any \(r\)-colouring of the points of \(S\) admits a monochromatic copy of \(C\). A compactness argument shows that \(\BBR^n \xto{r} C\) if and only if some finite \(S\subset \BBR^n\) has \(S\xto{r} C\).
A configuration is called \emph{Ramsey} if, for any such \(r\), there is some \(n\) such that \(\BBR^n \xto{r} C\). This notion was introduced by Erd\H{o}s, Graham, Montgomery, Rothschild, Spencer, and Straus \cite{erdos1973}, who showed that every Ramsey set must be \emph{spherical}---that is, lie on the surface of a sphere. They also showed that the product of any two Ramsey sets is itself Ramsey, and thus that every cuboid is Ramsey. Here a \emph{cuboid} means any set of the form \(R=\{0,b_1\}\times \cdots\times \{0,b_s\}\)---in particular, cuboids may have dimension greater than three. The question of which sets are Ramsey remains open.

Of course, if we remove the restriction on the number of colours, then we cannot hope to force a monochromatic copy of any non-trivial configuration, as we could simply colour each point differently. Instead, we ask whether any colouring forces the presence of either a monochromatic configuration or a \emph{rainbow} configuration---one in which every point receives a different colour. For \(n\) a positive integer, and for \(C\subset \BBR^n\), we write
\[\BBR^n \MRto C\]
if any colouring of the points of \(\BBR^n\) admits either a monochromatic copy or a rainbow copy of \(C\). As before, for any \(S\subset \BBR^n\), we write
\[S \MRto C\]
if any colouring of the points of \(S\) admits either a monochromatic copy or a rainbow copy of \(C\). If there is some \(n\) such that \(\BBR^n \MRto C\), we say that \(C\) has the \emph{canonical Ramsey property}, or simply that \(C\) is \emph{canonically Ramsey}.
 
These concepts were first studied by Mao, Ozeki, and Wang \cite{mao2022euclideangallairamseytheory}, who showed that a 30-60-90 triangle is canonically Ramsey. Cheng and Xu \cite{cheng2023euclideangallairamseyvariousconfigurations} showed that right-angled triangles, triangles with circumradius less than their height, squares, and a large family of simplices, are canonically Ramsey. Gehér, Sagdeev, and Tóth \cite{geher2026} proved that all hypercubes are canonically Ramsey---here \emph{hypercubes} are the special case of cuboids with the form \(\{0,b\}^s\). Fang, Ge, Shu, Xu, Xu, and Yang \cite{fang2025canonicalramseytrianglesrectangles} went on to show that all triangles, and indeed all rectangles, are canonically Ramsey. They asked whether all cuboids are canonically Ramsey. Our main theorem answers this question:
\begin{theorem}\label{thm:main}
    Every cuboid is canonically Ramsey.
\end{theorem}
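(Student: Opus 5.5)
We argue by induction on the number \(s\) of sides, working not with \(\BBR^n\) directly but with products of finite point sets. The base case \(s=1\) is a pair of points \(\{0,b_1\}\), and there the conclusion is trivial: any colouring of two points at distance \(b_1\) is either monochromatic or rainbow. For the inductive step we write \(R=R'\times\{0,b_s\}\), where \(R'\) is a cuboid with \(s-1\) sides. By the inductive hypothesis there is a finite \(S'\) with \(S'\MRto R'\). The aim is to build \(S=S'\times T\) for a suitable finite \(T\), or more generally a product of many copies of \(S'\) and of segments, and show \(S\MRto R\).

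The key strengthening is to carry a \emph{canonical} statement through the induction rather than the bare monochromatic/rainbow dichotomy. It says: for every colouring of \(S\) there is a copy of \(R\), identified with \(\{0,1\}^s\), on which the colour of a vertex \(x\) depends only on \(x|_I\) and is injective in \(x|_I\), for some set \(I\subseteq[s]\) of coordinates. This is the Euclidean analogue of the Erd\H{o}s--Rado canonical theorem for the Hales--Jewett cube. Monochromatic corresponds to \(I=\emptyset\), rainbow to \(I=[s]\), and the intermediate cases are the obstruction we must remove.

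The key steps are as follows.

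First, we prove a product lemma. If \(S_1\) forces a canonical copy of \(C_1\) and \(S_2\) forces a canonical copy of \(C_2\), then some product of copies of them forces a canonical copy of \(C_1\times C_2\). The argument fixes a point of \(S_1\) and colours \(S_2\) by the resulting ``fibre colouring''. Since there are unboundedly many colours, this cannot be done by a plain Ramsey product argument. Instead we use the finite-colour Ramsey property of \(S_2\) (cuboids are Ramsey, by Erd\H{o}s et al.) on the finitely many \emph{patterns of equalities} among colours. These are equivalence relations on a fixed finite set, so their number is finite. This yields fibres with a uniform equality pattern, and canonical structure inside each fibre follows.

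Second, we perform pattern collapse. Given a canonical copy with \(\emptyset\ne I\ne[s]\), we want to upgrade it to a monochromatic or rainbow copy. To do this we embed \(R\) into a larger cuboid in which the sides \(b_i\) are repeated: for instance, copies of \(\{0,b_i\}^m\), which are hypercubes, for each \(i\). We then use the known fact that hypercubes are canonically Ramsey (Gehér, Sagdeev and Tóth) inside each direction class, together with Pythagoras-type re-embeddings. A diagonal of a sub-hypercube gives new segments of length \(b_i\sqrt{k}\). The trick is that a cuboid congruent to \(R\) can be found whose edges in direction \(i\) use coordinates from \emph{two} different blocks, one on which the colour depends and one on which it does not. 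The colour pattern of this mixed copy is forced to be either all-injective or all-constant.

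The main obstacle I expect is this second step: turning a mixed canonical pattern (dependence on some but not all coordinates) into a genuinely monochromatic or rainbow copy. A naive choice of sub-cuboid inherits the same mixed pattern. My first attempt would be to choose the ambient set as \(\prod_i \{0,b_i/\sqrt2\}^{2}\)-type blocks, iterated, so that each side \(b_i\) is realised as a diagonal spanning two coordinates. Canonicity applied at the finer level then shows the dependence set \(I\) must be closed under a ``mixing'' operation that forces \(I\in\{\emptyset,[s]\}\). If that fails, I would fall back on a counting/pigeonhole argument over many disjoint translates to rule out the mixed patterns.
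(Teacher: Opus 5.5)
There is a genuine gap, and it sits exactly where you expect it. The collapse step is only asserted (``forced to be either all-injective or all-constant''). It does not follow from canonicity plus Pythagorean re-embedding. Suppose the colour depends injectively on the coordinates of the blocks realising direction \(1\), and not at all on those realising the other directions. Then an edge or diagonal is non-constant precisely when it moves a dependent coordinate. So, at least for generic side lengths, every copy of \(R\) obtained by your re-embeddings has dependence set exactly \(\{1\}\): neither monochromatic nor rainbow.

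The product lemma has a more basic hole. Colouring each \(y\in S_2\) by the position and equality pattern of a canonical copy in its fibre \(S_1\times\{y\}\) is a colouring of points. So the Ramsey property of \(S_2\) only makes the patterns \emph{inside} each fibre agree; it says nothing about colour coincidences \emph{between} fibres. When the fibres are rainbow, these coincidences are arbitrary. For two fibres over \(y,y'\) one can have \(\chi(x,y)=\chi(\sigma x,y')\) for a permutation \(\sigma\), or a single column constant and the rest distinct. Either gives a non-canonical pattern on the product. Controlling this would require a Ramsey theorem for colourings of pairs or of copies, which you do not have.

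The missing idea is never to uniformise over rainbow fibres, and to get the rainbow \(R\) from a single block that carries the whole geometry of \(R\). The paper proves the asymmetric statement \(C(a)\MRto(\{0,a\},R)\) for the explicitly Ramsey set \(C(a)=D_1\times\cdots\times D_s\). Here \(D_i\) is a simplex indexed by a complete \(n_i\)-ary tree, with siblings at distance \(a\) and ancestor--descendant pairs at distance \(b_i\); this needs \(a\le b_i\). If there is no monochromatic \(\{0,a\}\), siblings in every line receive distinct colours. A counting argument then produces branches along which every line is rainbow. A second counting argument, which needs long sides, extracts a rainbow \(2\times\cdots\times 2\) sub-box, i.e.\ a rainbow \(R\).

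The monochromatic side is then built by stacking. Replace \(R\) by a cuboid \(R'\) with all sides \(a_j\le\min_i b_i\) that contains a copy of \(R\). Take \(S_1\times\cdots\times S_t\) with \(S_1=C(a_1)\) and \(S_k\xto{m_k}C(a_k)\). Colour each point of \(S_k\) by the position of a monochromatic copy of \(\{0,a_1\}\times\cdots\times\{0,a_{k-1}\}\) in its fibre. Because that sub-box is monochromatic, each fibre contributes a single colour. The interaction between fibres is therefore just a colouring of one copy of \(C(a_k)\), which the asymmetric lemma handles.
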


Throughout this paper, we will embed \(\BBR^n\) into \(\BBR^{n'}\), for \(n<n'\), in the usual way. We write \(\mathbf{e}_1,\dots,\mathbf{e}_n\) for the standard basis of \(\BBR^n\).

\section{Overview}
As the construction is quite involved, we give an overview. Unfortunately, even this overview needs a fair amount of notation, to explain the ideas of the proof.

We will need the following slight generalisation of the notation already introduced: for \(S\subset \BBR^n\), and configurations \(C,C'\subset \BBR^n\), we write
\[S\MRto (C,C')\]
if any colouring of the points of \(S\) admits either a monochromatic copy of \(C\), or a rainbow copy of \(C'\).

We will show that the cuboid \(R=\{0,b_1\}\times \cdots\times \{0,b_s\}\) is canonically Ramsey. We begin by constructing a set \(C\) to satisfy the following lemma, which depends on an additional parameter \(a\leq\min\{b_1,\dots,b_s\}\):
\begin{lemma}\label{lem:IR}
    Let \(R=\{0,b_1\}\times \cdots\times \{0,b_s\}\), and suppose that \(a\leq \min\{b_1,\dots,b_s\}\). Then there is a configuration \(C(a)\) such that
    \[C(a) \MRto \left(\{0,a\},R\right),\]
    and \(C(a)\) is Ramsey.
\end{lemma}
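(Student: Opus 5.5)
The plan is to take \(C(a)\) to be a product \(X_1\times\cdots\times X_s\) of finite sets, where \(X_i\subset\BBR^{N}\) is a ``gadget'' attached to the side \(b_i\), and to deduce the required property from the canonical Ramsey theorem of Erd\H{o}s and Rado. For Ramseyness I will make each \(X_i\) an affine image of a set of the form \(\{\sum_t \lambda_t\mathbf{e}_{j_t} : j_1<\dots<j_m\}\subset\BBR^N\) with fixed weights \(\lambda_t\). Each such set is a subset of a product of simplices (equivalently, a subset of a \(0/1\)-type set on a sphere), so it is Ramsey by Frankl--R\"odl together with the product theorem of Erd\H{o}s, Graham, Montgomery, Rothschild, Spencer and Straus. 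The product \(C(a)\) is then Ramsey by the same product theorem. It remains to choose the weights so that the combinatorial statement holds.

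For that statement, fix a colouring \(\chi\) of \(C(a)\). A point of \(C(a)\) is indexed by an \(s\)-tuple of \(m\)-subsets of \([N]\), so \(\chi\) is a colouring of \(\binom{[N]}{m}^s\). I will apply the product (multi-dimensional) version of the Erd\H{o}s--Rado canonical theorem, obtained by iterating the one-dimensional theorem coordinate by coordinate and passing to large subsets. This gives large sets \(M_1,\dots,M_s\subset[N]\) and sets \(I_i\subseteq[m]\) of positions with the following property: on \(\binom{M_1}{m}\times\cdots\times\binom{M_s}{m}\), two points receive the same colour if and only if, for every \(i\), their \(i\)-th index sets agree on the positions in \(I_i\).

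Two cases then arise. If every \(I_i\) is nonempty, I choose a copy of \(R\) inside \(C(a)\) whose two vertices in direction \(i\) differ in the \(i\)-th factor by changing only entries in positions of \(I_i\). By canonicity, all \(2^s\) vertices of this copy get distinct colours, so it is a rainbow copy of \(R\). If instead some \(I_i\) is a proper subset of \([m]\), I pick a position \(t\notin I_i\) and two points that differ only in the \(t\)-th entry of the \(i\)-th index set. These receive the same colour, and their distance is \(\sqrt2\,\lambda_t\) times the scale of \(X_i\). If this distance equals \(a\), this is a monochromatic copy of \(\{0,a\}\). Note that the first case really requires every position to be ``free'' in the sense of the second case, so the case split must be arranged accordingly.

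The main obstacle is designing the gadget \(X_i\) so that these two cases are genuinely exhaustive with compatible distances. Every non-injective canonical type must produce a monochromatic pair at distance exactly \(a\), while every injective type must contain a pair at distance exactly \(b_i\) realised by changing coordinates on which the colour depends. A naive two-weight gadget \(\alpha\mathbf{e}_j+\beta\mathbf{e}_k\) (\(j<k\)) with \(\sqrt2\beta=a\) and \(\sqrt2\alpha=b_i\) fails: if the colour depends only on \(k\), the monochromatic pairs have distance \(b_i\) rather than \(a\). My first attempt at a fix is to allow, within a single factor, moves that change several entries simultaneously. A move changing entries in positions \(T\) has length \(\sqrt{2\sum_{t\in T}\lambda_t^2}\) (or the analogous expression when indices interleave). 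I will choose the weights so that every single-position move has length \(a\); this is possible since \(a\le b_i\), by splitting \(b_i^2\) into summands equal to \(a^2\) plus a remainder absorbed by a slightly shorter move, or by choosing \(m\) and the \(\lambda_t\) with \(2\sum_t\lambda_t^2=b_i^2\) and each \(2\lambda_t^2\) close to \(a^2\). I will also add enough rotated copies that every single-position move realises distance \(a\). Then any position outside \(I_i\) gives a monochromatic \(a\)-pair, and if \(I_i=[m]\), the move changing all positions has length \(b_i\) and changes the colour. Verifying that such weights exist for arbitrary \(a\le b_i\), possibly by replacing \(X_i\) with a union of a few such layers or a two-step chain, is the step I expect to need the most care.
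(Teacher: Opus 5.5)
There is a genuine gap, and it is the step you flag yourself: with your gadget it cannot be closed. Take the canonical type with $I_i=[m]\setminus\{t\}$ and $I_{i'}=[m]$ for all $i'\neq i$. The only monochromatic pairs are then pairs that agree in every other factor and differ only in the $t$-th entry of the $i$-th index set. Since both index sets are increasing, the two differing entries lie in the same gap $(j_{t-1},j_{t+1})$, so the vectors differ in exactly two coordinates. Their distance is therefore $\sqrt2\lambda_t$ times the scale, with no interleaving freedom. Hence you need $\sqrt2\lambda_t=a$ exactly, for every $t$. Making $\sqrt2\lambda_t$ merely close to $a$, or letting one shorter position absorb a remainder, fails precisely for the type that omits that position. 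But once all $\lambda_t$ are equal, $X_i$ is a scaled hypersimplex and every distance in it is $a\sqrt{|A\setminus B|}$. So a pair at distance $b_i$ exists only when $b_i^2/a^2$ is an integer, and your construction proves the lemma only in that special case. No single $a$ can serve all sides when some $b_i^2/b_{i'}^2$ is irrational. The suggested unions of rotated layers are not analysed, and they would leave the single ${[N]\choose m}^s$ structure on which you run Erd\H{o}s--Rado.

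Two smaller points. First, the correct dichotomy is `all $I_i=[m]$' (rainbow $R$) versus `some $I_i\neq[m]$' (monochromatic $\{0,a\}$). Your `every $I_i$ nonempty' case would need, for a singleton $I_i=\{t\}$, a move of length $b_i$ changing only position $t$, but that move has length $a$. Second, `subset of a product of simplices' is only evident for equal weights (a subset of a cube), not for unequal weights sharing coordinates.

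What is missing is a way to make squared distances an \emph{affine}, rather than linear, function of the number of changed positions. For instance, give each position its own coordinate block and each point a private coordinate, $x_A=\alpha\sum_{t=1}^m\mathbf{e}^{(t)}_{j_t}+\gamma\,\mathbf{g}_A$. A move changing exactly the positions in a nonempty set $U$ then has squared length $2\alpha^2|U|+2\gamma^2$. The system $2\alpha^2+2\gamma^2=a^2$, $2m\alpha^2+2\gamma^2=b_i^2$ has a real solution once $m\geq\max\{2,b_i^2/a^2\}$, and this is exactly where $a\le b_i$ enters. This set lies in a product of regular simplices, so it is Ramsey. With the corrected dichotomy your argument then goes through, using the product canonical theorem. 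That theorem is best obtained by one application of Erd\H{o}s--Rado to $(m_1+\dots+m_s)$-sets split into consecutive blocks, rather than by iterating coordinate by coordinate. This would be a genuinely different route from the paper, which avoids the canonical theorem altogether. The paper indexes a simplex $D(T,a,b_i)$ by a complete $n_i$-ary tree, with siblings at distance $a$ and ancestor--descendant pairs at distance $b_i$. It then uses two elementary counting lemmas: one passes to branches that are rainbow along every line, and the other extracts a rainbow $2\times\dots\times 2$ sub-box.
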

To do this, we will first construct simplices \(D_i\) whose vertex sets admit a tree structure such that:---
\begin{itemize}
    \item every vertex has distance \(a\) from each of its siblings, and
    \item every vertex has distance \(b_i\) from all of its ancestors.
\end{itemize}
We will take the underlying tree structure on \(D_i\) to be a complete \(n_i\)-ary tree \(T_i\) of large height \(h\)---that is to say, a rooted tree in which every vertex at distance less than \(h\) from the root has exactly \(n_i\) children---for some very large \(n_i\).
Our configuration \(C(a)\) will be \(D_1\times\cdots\times D_s\), for suitable choices of parameters \(n_i\) and \(h\). Note that as a product of simplices, this configuration is Ramsey.

Fix a colouring of \(C(a)\), and suppose that it admits no monochromatic \(\{0,a\}\).
This induces a colouring on \(T_1\times\dots\times T_s\) such that no two siblings have the same colour.
We say a colouring of a rooted tree is \emph{proper} if no two siblings have the same colour, and additionally each vertex has a different colour from all its ancestors.
We will then use a straightforward counting argument to select a large \(n'_1\)-ary tree \(T'_1\subset T_1\) such that every copy of \(T'_1\) in \(T'_1\times T_2\times \cdots \times T_s\) is properly coloured.
Repeating this operation, we obtain \(T'_i\subset T_i\) such that every copy of any \(T'_i\) in their product \(T'_1\times\cdots\times T'_s\) is properly coloured.

We then choose a long branch \(B_i\) of each \(T'_i\), and note that this corresponds to a regular simplex of side length \(b_i\). Now any two points of \(B_1\times\dots\times B_s\) that differ in only one of the \(s\) coordinates must be differently coloured. We show that we can find arbitrarily large \(B'_i\) such that \(B'_1\times\dots\times B'_s\) is rainbow. In particular, when each \(B'_i\) has size two, this corresponds to a rainbow copy of \(R\).

We then construct a configuration \(S\) such that \(S\MRto (R,R)\) as follows: first choose \(a_1,\dots,a_t\leq \min\{b_1,\dots, b_s\}\) such that \(R'=\{0,a_1\}\times \dots \times \{0,a_t\}\) contains a copy of \(R\). Now write \(S_1=C(a_1)\), and for \(i=2\dots,t\), we choose \(S_i\) so that
\[S_i\xto{m_i}C(a_i),\]
for suitably chosen large \(m_i\). Now we take \[S=S_1\times\dots\times S_t,\]
and consider a colouring of \(S\) with no rainbow copy of \(R\). But now every copy of \(S_1\) must contain a monochromatic copy of \(\{0,a_1\}\). But there are finitely many copies of \(\{0,a_1\}\) in \(S_1\). Hence we may consider an auxiliary colouring of \(S_2\times\dots\times S_t\) corresponding to the locations of these monochromatic copies of \(\{0,a_1\}\). But this is a finite colouring, so for suitable \(m_2\), we can guarantee that each copy of \(S_1\times S_2\) contains a copy of \(\{0,a_1\}\times C(a_2)\) where the two copies of \(C(a_2)\) receive the same colouring.

But now this contains a monochromatic \(\{0,a_2\}\), and hence each copy of \(S_1\times S_2\) contains a monochromatic \(\{0,a_1\}\times \{0,a_2\}\). For suitable \(m_i\), we then repeat this argument and ultimately find a monochromatic copy of \(R\), completing the proof.

\section{Constructing \(C(a)\)}

Let \(T\) be a rooted tree, and let \(a\leq b\) be positive. Then we will construct a configuration \(D(T,a,b)\), in some \(\BBR^n\) for sufficiently large \(n\), together with a bijection \(\phi:T\to D(T,a,b)\), so that
\begin{itemize}
    \item if \(u\) and \(v\) are siblings, then the distance from \(\phi(u)\) to \(\phi(v)\) is \(a\), and
    \item if \(u\) is an ancestor of \(v\), then the distance from \(\phi(u)\) to \(\phi(v)\) is \(b\).
\end{itemize}. We construct \(D(T,a,b)\) and \(\phi\) inductively, as follows:
\begin{itemize}
    \item If \(T\) contains a single vertex, let \(D(T,a,b)\) be a single point, with the obvious bijection.
    \item If \(T'\) is formed from \(T\) by adding \(k\) children to a leaf \(v\) of \(T\), then suppose that \(D(T,a,b)\subset\BBR^n\), and let \(S\subset\BBR^n\) be the points identified by the corresponding \(\phi\) with \(v\) and its ancestors. These all have pairwise distance \(b\), so we may choose \(\mathbf{x}\in \BBR^n\) so that \(\{\mathbf{x}\}\cup S\) is congruent to \(\left\{\mathbf{0},\frac{b}{\sqrt{2}}\mathbf{e}_1,\dots,\frac{b}{\sqrt{2}}\mathbf{e}_{|S|}\right\}\). Now for \(i=1,\dots,k\), define \[\mathbf{w}_i=\mathbf{x}+ \sqrt{\frac{b^2-a^2}{2}}\mathbf{e}_{n+1}+\frac{a}{\sqrt{2}}\mathbf{e}_{n+1+i}.\]
    Then set \(D(T',a,b)=D(T,a,b)\cup \{\mathbf{w}_1,\dots,\mathbf{w}_k\}\), and define the corresponding bijection by extending that of \(D(T,a,b)\) to map the children of \(v\) to \(\{\mathbf{w}_1,\dots,\mathbf{w}_k\}\) in some order.
\end{itemize}

It is easy to verify that our desired distance conditions hold. Note also that by construction, \(D(T,a,b)\) is a simplex, and therefore Ramsey. We will first need a couple of lemmata:

\begin{lemma}\label{lem:propertree}
    Let \(h,n'_1,\dots,n'_s\) be positive integers. Then there exist positive integers \(n_1,\dots,n_s\) such that the following holds: for each \(i=1,\dots,s\), let \(T_i\) be a complete \(n_i\)-ary tree of height \(h\). Suppose that \(T_1\times\dots\times T_s\) is coloured such that in each copy of a \(T_i\), no two siblings have the same colour. Then there are some \(T'_i\subset T_i\) such that each \(T'_i\) is a complete \(n'_i\)-ary tree of height \(h\), and additionally, in each copy of \(T'_i\) within \(T'_1\times\dots\times T'_s\), no vertex and its ancestor have the same colour.
\end{lemma}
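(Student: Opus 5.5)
The plan is to build the subtrees \(T'_1,\dots,T'_s\) one coordinate at a time, each by a greedy top-down pruning, and to fix the integers \(n_i\) in reverse order \(n_s,n_{s-1},\dots,n_1\) so that the counting works without circularity. Two simple facts are used throughout.

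\begin{enumerate}
\item If \(T''_i\subset T_i\) is a complete subtree of the same height, containing the root and closed under taking parents, then siblings and ancestors in \(T''_i\) are siblings and ancestors in \(T_i\). Hence shrinking any coordinate preserves both the hypothesis (no two siblings share a colour) and any already-established conclusion (no vertex shares a colour with an ancestor) for the copies of the other trees.
\item In a copy of \(T_i\) obtained by fixing the other coordinates \(y\), the children of a vertex \(v\) receive pairwise distinct colours. So for each fixed ancestor \(u\) of those children, at most one child \(c\) has \(\operatorname{col}(c,y)=\operatorname{col}(u,y)\).
\end{enumerate}

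First I fix the order of construction. Having already obtained \(T'_1,\dots,T'_{i-1}\), I construct \(T'_i\subset T_i\), working in the product \(T'_1\times\dots\times T'_{i-1}\times T_i\times T_{i+1}\times\dots\times T_s\). Let
\[
N_i=\prod_{j<i}|T'_j|\prod_{j>i}|T_j|
\]
be the number of copies of \(T_i\) in this product, that is, the number of choices of the other coordinates \(y\). Processing \(T_i\) level by level from the root, suppose a vertex \(v\) has been kept. I call a child \(c\) of \(v\) \emph{bad} if there exist some \(y\) and some ancestor \(u\) of \(c\) (one of at most \(h\) vertices, including \(v\)) with \(\operatorname{col}(c,y)=\operatorname{col}(u,y)\). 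By fact 2, the number of bad children is at most \(hN_i\). So if
\[
n_i\ \ge\ n'_i+hN_i,
\]
I can keep \(n'_i\) good children of every kept vertex. This yields a complete \(n'_i\)-ary tree \(T'_i\) of height \(h\) in which, in every copy, each vertex differs in colour from all its ancestors. By fact 1, this property survives the later shrinking of the other coordinates, and it is also not destroyed for the earlier \(T'_j\), since we only pass to subsets.

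The remaining issue, which I expect to be the only real obstacle, is the apparent circularity: \(N_i\) involves \(|T_j|\), and hence \(n_j\), for \(j>i\). This is resolved by choosing the \(n_i\) in decreasing order of \(i\). The sizes \(|T'_j|\) are determined by \(n'_j\) and \(h\) alone. So \(n_s\) need only exceed \(n'_s+h\prod_{j<s}|T'_j|\), which is an explicit number. Once \(n_s,\dots,n_{i+1}\) are fixed, the sizes \(|T_j|=\sum_{k\le h}n_j^k\) for \(j>i\) are known, and \(n_i\) can be chosen to satisfy the displayed inequality. With these choices, the sequential greedy construction for \(i=1,\dots,s\) gives the required \(T'_1,\dots,T'_s\).
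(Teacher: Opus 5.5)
Your proposal is correct and follows the paper's proof essentially step for step: you fix the \(n_i\) in the order \(n_s,\dots,n_1\), build the \(T'_i\) for \(i=1,\dots,s\) by greedy top-down pruning, and bound the bad children of each kept vertex by \(h\) per copy of \(T_i\) using the fact that siblings receive distinct colours. Your count \(N_i=\prod_{j<i}|T'_j|\prod_{j>i}|T_j|\) of copies of \(T_i\), which uses vertex counts, is in fact the right one. The paper's inequalities instead use the branching numbers \(n'_1\cdots n'_{i-1}n_{i+1}\cdots n_s\) and so undercount, but the slip is harmless since the \(n_i\) only need to be sufficiently large.
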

\begin{proof}
    We first choose \(n_s,\dots,n_1\) in that order, so that \begin{align*}n_s&\geq n'_s+hn'_1\dots n'_{s-1},\\
    n_{s-1}&\geq n'_{s-1}+hn'_1\dots n'_{s-2}n_s,\\
    &\vdots\\
    n_1&\geq n'_1+hn_2\dots n_s.\end{align*}

    We now choose \(T'_i\) in turn for each \(i=1,\dots,s\) so that each copy of \(T'_i\) in \(T'_1\times\dots T'_i \times T_{i+1} \times \dots \times T_s\) has the desired property. There are \(n'_1\dots n'_{i-1}n_{i+1}\dots n_s\) copies of \(T_i\) in \(T'_1\times\dots T'_{i-1} \times T_i \times \dots \times T_s\). In each of these, every vertex has at most \(h\) children which share a colour with one of their ancestors, since their children are all of different colours.
    
    We say a vertex of \(T_i\) is \emph{bad} if it has the same colour as one of its ancestors in one of these copies of \(T_i\), and \emph{good} otherwise. But now each vertex has at most \(hn'_1\dots n'_{i-1}n_{i+1}\dots n_s\) bad children---so in particular, each vertex has at least \(n'_i\) good children. But now we may choose \(T'_i\) inductively, by including the root, together with \(n'_i\) good children of each vertex included so far that is not a leaf of \(T_i\). This yields a complete \(n'_i\)-ary tree of height \(h\) with the desired property.

    Since \(T'_i\subset T_i\), this implies that that \(T'_1\times\dots\times T'_s\) has the desired property.
\end{proof}

\begin{lemma}\label{lem:rainbowbox}
    Let \(m'_1,\dots,m'_s\) be positive integers. Then there exist positive integers \(m_1,\dots,m_s\) such that the following holds: for each \(i=1,\dots,s\), let \(B_i\) be a set of size \(m'_i\), and let \(B_1\times\dots\times B_s\) be coloured such that any two points which differ in exactly one coordinate have different colours. Then there are sets \(B'_1,\dots,B'_s\) such that each \(B'_i\) has size \(m'_i\), and \(B'_1\times \dots \times B'_s\) is rainbow.
\end{lemma}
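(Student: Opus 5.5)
The statement presumably intends \(|B_i|=m_i\) and asks for \(B'_i\subset B_i\) with \(|B'_i|=m'_i\). I will prove it in that form by a canonisation argument using the product Ramsey theorem (Graham–Rothschild–Spencer), rather than a greedy construction. I identify each \(B_i\) with \(\{1,\dots,m_i\}\), so it carries a linear order.

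\textbf{Order types.} For two points \(\mathbf{p},\mathbf{q}\in B_1\times\dots\times B_s\), define their \emph{order type} as the vector recording, for each coordinate \(j\), whether \(p_j<q_j\), \(p_j=q_j\) or \(p_j>q_j\). A pair \((\mathbf{p},\mathbf{q})\) lives on the sub-box \(P_1\times\dots\times P_s\), where \(P_j=\{p_j,q_j\}\) has size one or two. Fix a shape \(\mathbf{d}\in\{1,2\}^s\). Colour each choice \((P_1,\dots,P_s)\) of subsets \(P_j\subset B_j\) with \(|P_j|=d_j\) by the following datum: which pairs of points of \(P_1\times\dots\times P_s\) receive equal colour. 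This is a colouring with at most \(2^{\binom{2^s}{2}}\) colours.

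\textbf{Canonising.} Apply the product Ramsey theorem successively to each of the \(2^s\) shapes \(\mathbf{d}\), passing to sub-boxes each time. This yields \(m_1,\dots,m_s\) large enough that there are \(A_i\subset B_i\) with \(|A_i|=3m'_i\) and the following property: whether \(c(\mathbf{p})=c(\mathbf{q})\) for \(\mathbf{p},\mathbf{q}\in A_1\times\dots\times A_s\) depends only on the order type of \((\mathbf{p},\mathbf{q})\). Now let \(B'_i\) consist of the elements of \(A_i\) in positions \(2,5,8,\dots\) of the order on \(A_i\). Then \(|B'_i|=m'_i\), and between any two consecutive chosen elements, as well as below the first and above the last, there are spare elements of \(A_i\).

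\textbf{Rainbow.} Suppose \(\mathbf{x},\mathbf{y}\in B'_1\times\dots\times B'_s\) are distinct with \(c(\mathbf{x})=c(\mathbf{y})\), and let \(J\) be the set of coordinates in which they differ. If \(|J|=1\), this contradicts the hypothesis. If \(|J|\geq 2\), pick \(j\in J\), say with \(x_j<y_j\). Let \(\mathbf{z}\) agree with \(\mathbf{y}\) except that \(z_j\in A_j\) is a spare element adjacent to \(y_j\), chosen so that \(x_j<z_j\) and \(z_j\neq y_j\); the spacing guarantees such an element exists. Then \((\mathbf{x},\mathbf{z})\) has the same order type as \((\mathbf{x},\mathbf{y})\), so canonicity gives \(c(\mathbf{x})=c(\mathbf{z})\). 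Hence \(c(\mathbf{y})=c(\mathbf{z})\), although \(\mathbf{y}\) and \(\mathbf{z}\) differ in exactly one coordinate, contradicting the hypothesis. The case \(x_j>y_j\) is symmetric. Therefore \(B'_1\times\dots\times B'_s\) is rainbow.

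\textbf{Main obstacle.} The delicate step is the canonisation: one must ensure that a single sub-box is simultaneously canonical for all pair-shapes. It is also important that canonicity holds on the larger grid \(A_1\times\dots\times A_s\), which contains the witnesses \(\mathbf{z}\), and not merely on the \(B'_i\). Both are handled by iterating the product Ramsey theorem over the finitely many shapes and only then thinning out to the \(B'_i\).
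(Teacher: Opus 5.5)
Your proof is correct, and you read the intended statement correctly (the hypothesis should be \(|B_i|=m_i\), with \(B'_i\subset B_i\)). It follows a genuinely different route from the paper.

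The paper argues by induction on \(s\):
\begin{itemize}
\item Each slice \(B_1\times\dots\times B_{s-1}\times\{y\}\) contains a rainbow sub-box by the inductive hypothesis.
\item Pigeonholing over the \({m_1\choose m'_1}\cdots{m_{s-1}\choose m'_{s-1}}\) possible sub-boxes makes this sub-box the same for a large set \(B^*_s\) of values \(y\).
\item The points \(y_1,\dots,y_{m'_s}\) are then chosen greedily. This uses only that each line \(\{\mathbf{x}\}\times B^*_s\) is rainbow, so the at most \(\ell m'_1\cdots m'_{s-1}\) colours already used exclude at most \(\ell(m'_1\cdots m'_{s-1})^2\) values of \(y\).
\end{itemize}

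You instead canonise the equality pattern of the colouring via the product Ramsey theorem. The key observation is that equality patterns form a bounded palette, however many colours the original colouring uses. A single sliding step then converts any coincidence \(c(\mathbf{x})=c(\mathbf{y})\) into a coincidence along a line. Your spacing, together with canonicity holding on the larger grid \(A_1\times\dots\times A_s\), is exactly what makes the witness \(\mathbf{z}\) available.

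What each route buys:
\begin{itemize}
\item Your argument is non-inductive, symmetric in the coordinates, and shows cleanly why the line condition propagates to the whole box. However, it rests on a heavy external theorem and gives enormous bounds, from iterating product Ramsey over the \(2^s\) shapes.
\item The paper's argument is elementary and self-contained. It gives the explicit bound \(m_s\geq{m_1\choose m'_1}\cdots{m_{s-1}\choose m'_{s-1}}(m'_1\cdots m'_{s-1})^2m'_s\).
\end{itemize}
The lemma is only used qualitatively (with every \(m'_i=2\)) in the proof of Lemma~\ref{lem:IR}, so either route suffices there.
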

\begin{proof}
    We prove this by induction, noting that it is trivial for \(s=1\). Suppose it holds for all \(s\) smaller than the value we are considering, and choose \(m_1,\dots,m_{s-1}\) accordingly. Now set \[m_s\geq {m_1\choose m'_1}\cdots {m_{s-1}\choose m'_{s-1}}(m'_1\cdots m'_{s-1})^2m'_s.\]

    By the inductive hypothesis, each set of the form \(B_1\times\dots\times B_{s-1}\times \{y\}\) contains some rainbow set \(B'_1\times\dots\times B'_{s-1}\times \{y\}\) where each \(B'_i\) has size \(m'_i\). But there are only \({m_1\choose m'_1}\cdots {m_{s-1}\choose m'_{s-1}}\) possible choices for \(B'_1,\dots,B'_s\), so there is some such choice of \(B'_i\) such that \(B'_1\times\dots\times B'_{s-1}\times \{y\}\) is rainbow for at least \((m'_1\cdots m'_{s-1})^2m_s\) choices of \(y\). Let \(B^*_s\) be the set of such \(y\).

    We will choose the elements of \(B'_s=\{y_1,\dots,y_{m'_s}\}\) one at a time. Suppose that we have already chosen \(y_1,\dots,y_\ell\). Now \(B'_1\times\dots\times B'_{s-1}\times \{y_1,\dots,y_\ell\}\) use at most \(\ell m'_1\cdots m'_{s-1}\) different colours. Call these the \emph{bad} colours. Then each copy of \(B^*_s\) in  \(B'_1\times\dots\times B'_{s-1}\times B^*_s\) is rainbow, so has a bad colour in at most \(\ell m'_1\cdots m'_{s-1}\) different positions.
    
    Call \(y\in B^*_s\) \emph{bad} if \(B'_1\times\dots\times B'_{s-1}\times \{y\}\) uses one of these colours, and call it \emph{good} otherwise. But now there are at most \(\ell (m'_1 \cdots m'_{s-1})^2\) bad choices of \(y\in B^*_s\). This is less than \(\left|B^*_s\right|\), so we may choose a \(y_{\ell+1}\) that is good.
    
    But now no colour is reused between different sets of the form \(B'_1\times\dots\times B'_{s-1}\times \{y_i\}\). So since each of these sets is rainbow, the whole set \(B'_1\times\dots\times B'_s\) is, and we obtain the desired result.
\end{proof}

We are now ready to prove Lemma \ref{lem:IR}.

\begin{proof}[Proof of Lemma \ref{lem:IR}]
    Let \(m_1\dots,m_s\) be the values for which Lemma \ref{lem:rainbowbox} holds with parameters \(m'_1,\dots m'_s\) all equal to two Then let \(n_1,\dots,n_s\) be the values for which Lemma \ref{lem:propertree} holds with parameters \(h=\max\{m_1,\dots,m_s\}\), and \(n'_1,\dots,n'_s\) all equal to one.

    Now for each \(i=1,\dots,s\), let \(T_i\) be the complete \(n_i\)-ary tree of height \(h\), and let \(D_i=D(T,a,b_i)\). We now define
    \[C(a)=D_1\times\dots\times D_s.\]
    Notice that since each \(D_i\) is a simplex and therefore Ramsey, this configuration is also Ramsey.

    Now fix some colouring of \(C(a)\), and suppose that it contains no monochromatic copy of \(\{0,a\}\). Then this corresponds to a colouring of \(T_1\times\dots\times T_s\) in which no two siblings in any given copy of a \(T_i\) have the same colour. Thus by Lemma \ref{lem:propertree}, we may choose \(T'_i\subset T_i\) to be complete unary trees of height \(h\) such that in each copy of a \(T'_i\) within \(T'_1\times\dots\times T'_s\), no vertex and its ancestor have the same colour. But these are simply branches of length \(h\), such that each copy of a \(T'_i\) within \(T'_1\times\dots\times T'_s\) is rainbow.

    But now \(T'_1\times\dots\times T'_s\) is coloured in a way that meets the condition of Lemma \ref{lem:rainbowbox}. So for each \(i\), choose \(B_i\subset T'_i\) such that \(\left| B_i \right|=m_i\). Now by Lemma \ref{lem:rainbowbox}, there are some \(B'_i\subset B_i\) each of size two such that \(B'_1\times\dots\times B'_s\) is rainbow.
    
    But now the two points of \(B'_i\) lie on a branch \(T'_i\), and so one is an ancestor of the other. Hence the two points \(X_i\) corresponding to \(B'_i\) within \(D_i\) are at distance \(b_i\). Thus the set \(X_1\times\dots\times X_s\subset C(a)\) is a rainbow copy of \(R\).
\end{proof}

\section{Proof of Theorem \ref{thm:main}}
We have now constructed a configuration which allows us to force either a rainbow \(R\), or a monochromatic interval \(\{0,a\}\). We cannot apply this immediately, however, as this construction is only defined for intervals at most as large as the smallest side length of \(R\). We therefore define a new cuboid, \(R'\), whose side lengths are all at most \(b_{\min} =\min\{b_1,\dots,b_s\}\), and which nevertheless contains a copy of \(R\):
\begin{itemize}
    \item For \(i=1,\dots,s\), set \[t_i=\left\lceil\frac{b_i^2}{b_{\min}^2}\right\rceil.\]
    \item Now for each \(i\), set \[a_{(t_1+\dots+t_{i-1})+1}=\dots=a_{(t_1+\dots+t_{i-1})+t_i}=\frac{b_i}{\sqrt{t_i}},\] and notice that certainly this is at most \(b_{\min}.\)
\end{itemize}
We write \(t=t_1+\dots+t_s\). Now certainly \(R'=\{0,a_1\}\times\dots\times\{0,a_t\}\) contains a monochromatic copy of \(R\), with the form
\[\left\{(0,\dots,0),\left(\frac{b_1}{\sqrt{t_1}},\dots,\frac{b_1}{\sqrt{t_1}}\right)\right\}\times\dots\times\left\{(0,\dots,0),\left(\frac{b_s}{\sqrt{t_s}},\dots,\frac{b_s}{\sqrt{t_s}}\right)\right\}.\]

We are now ready to finish the proof of Theorem \ref{thm:main}.

\begin{proof}[Proof of Theorem \ref{thm:main}]
Since \(R'\) contains a copy of \(R\), it will suffice to construct a configuration \(S\) such that
\[S \MRto (R',R).\]
Recall that each \(a_i\) is at most \(\min\{b_1,\dots,b_s\}\), so the configuration \(C(a_i)\) is defined. And indeed, this configuration is Ramsey. So we may now define \(S\) as follows:
\begin{itemize}
    \item Set \(S_1=C(a_1)\).
    \item For \(i=2,\dots,t\), we define \(S_i\) in turn as follows. Let \(m_i\) be the number of copies of \(\{0,a_1\}\times\dots\times\{0,a_{i-1}\}\) in \(S_1\times\dots\times S_{i-1}\). Then choose \(S_i\) so that
    \[S_i\xto{m_i}C(a_i).\]
    \item Finally, set \(S=S_1\times\dots\times S_t\).
\end{itemize}
We will inductively show that for \(k=1,\dots,t\), we have \[S_1\times\dots\times S_k\MRto\left(\{0,a_1\}\times\dots\times\{0,a_k\},R\right).\]
The base case \(k=1\) is simply Lemma \ref{lem:IR}. We now suppose that this holds for \(k-1\), and prove the statement for \(k\).

Consider a colouring of \(S_1\times\dots\times S_k\), and suppose it has no rainbow copy of \(R\). Certainly this implies that every copy of \(S_1\times\dots\times S_{k-1}\) has a monochromatic copy of \(\{0,a_1\}\times\dots\times\{0,a_{k-1}\}\). This can be in one of \(m_k\) positions within \(S_1\times\dots\times S_{k-1}\). We generate an auxiliary colouring \(\chi\) of \(S_k\) as follows: for each point \(\mathbf{x}\) of \(S_k\), choose \(\chi(\mathbf{x})\) to be some copy of \(\{0,a_1\}\times\dots\times\{0,a_{k-1}\}\) within \(S_1\times\dots\times S_{k-1}\) such that \(\chi(\mathbf{x}) \times \{\mathbf{x}\}\) is monochromatic. Certainly this uses at most \(m_k\) colours.

But then the definition of \(S_k\) implies that there is some copy of \(C(a_k)\) which is monochromatic under \(\chi\)---that is to say, there is some \(C'\subset S_k\) congruent to \(C(a_k)\), and some \(R^*\) congruent to \(\{0,a_1\}\times\dots\times\{0,a_{k-1}\}\), such that for each point \(\mathbf{x}\) of \(C'\), the set \(R^*\times \{\mathbf{x}\}\) is monochromatic. Equivalently, every copy of \(C'\) in \(R^*\times C'\) receives the same colouring.

But now, by the assumption that our colouring contains no rainbow copy of \(R\), Lemma \ref{lem:IR} implies that the colouring of \(C'\) which is common to all copies in \(R^*\times C'\) must contain some monochromatic copy \(I\) of \(\{0,a_k\}\). But now \(R^*\times I\) is a monochromatic copy of \(\{0,a_1\}\times\dots\times\{0,a_k\}\). Hence we have
\[S_1\times\dots\times S_k\MRto\left(\{0,a_1\}\times\dots\times\{0,a_k\},R\right).\]
By induction, this holds for all \(k\). In particular, at \(k=t\), we find that
\[S \MRto\left(R',R\right).\]
Since \(R'\) contains a copy of \(R\), this gives the desired result.
\end{proof}

\section{Further questions}
We have shown that any cuboid is canonically Ramsey. This immediately implies that any subset of a cuboid is canonically Ramsey. In particular, any simplex with \(n\) vertices of the form \(b_1\mathbf{e}_1,\dots b_n\mathbf{e}_n\) is canonically Ramsey.

These sets are all themselves Ramsey. Indeed, Fang, Ge, Shu, Xu, Xu, and Yang \cite{fang2025canonicalramseytrianglesrectangles} conjecture that if a set is Ramsey, then it should also be canonically Ramsey. Remarkably, it also does not seem to be obvious that every canonically Ramsey set is Ramsey, and indeed we have been unable to establish this. We therefore ask:
\begin{question}
    If a set is canonically Ramsey, must it also be Ramsey?
\end{question}

There are configurations which are known not to be canonically Ramsey. Erd\H{o}s, Graham, Montgomery, Rothschild, Spencer, and Straus \cite{erdos1973} gave a spherical colouring of \(\BBR^n\) using three colours which contains no monochromatic copy of \(\{0,1,2,3\}\). Since such a colouring can contain no rainbow copy of this set, it follows that \(\{0,1,2,3\}\) is not canonically Ramsey.

We note that if a set \(S\) is contained in arbitrarily large canonically Ramsey sets, then it must be Ramsey---to see this, when using \(m\) colours, simply choose a canonically Ramsey set \(S'\) with more than \(m\) points that contains a copy of \(S\). Then with only \(m\) colours, we cannot produce a rainbow copy of \(S'\)---so by colouring the same configuration that witnesses that \(S'\) is canonically Ramsey, we must produce a monochromatic copy of \(S\).

Thus any result of the form `If \(S\) and \(T\) are canonically Ramsey, then so is \(S\times T\)' would immediately answer our question in the affirmative. Such a result would be of great interest. Indeed, so would even a weaker result that allows us to generate a new canonically Ramsey set from an old one by adding even a single point.

The simplest unknown special case of this question is the following:
\begin{question}
Is \(\{0,1,2\}\) canonically Ramsey?
\end{question}

If \(\{0,1,2\}\) were canonically Ramsey, this would resolve our first question negatively. Cheng and Xu \cite{cheng2023euclideangallairamseyvariousconfigurations} note that every colouring by distance from the origin, such as the one that witnesses that \(\{0,1,2\}\) is not Ramsey, induces a monochromatic or rainbow copy of this set---so any colouring that witnesses that \(\{0,1,2\}\) is not canonically Ramsey would have a different form.

\section{Acknowledgement}
The author is funded by an Internal Graduate Studentship of Trinity College, Cambridge.

	\bibliographystyle{plain}
\bibliography{main}

\end{document}